\title[Berman-Gibbs stability]{On Berman-Gibbs stability and K-stability 
of $\mathbb{Q}$-Fano varieties}
\author{Kento Fujita} 
\date{\today}
\subjclass[2010]{Primary 14L24; Secondary 14J17}
\keywords{Fano varieties, K-stability, multiplier ideal sheaves, K\"ahler-Einstein metrics}
\address{Department of Mathematics, Faculty of Science, 
Kyoto University, Kyoto 606-8502, Japan}
\email{fujita@math.kyoto-u.ac.jp}
\newcommand{\pr}{\mathbb{P}}
\newcommand{\Z}{\mathbb{Z}}
\newcommand{\Q}{\mathbb{Q}}
\newcommand{\A}{\mathbb{A}}
\newcommand{\G}{\mathbb{G}}
\newcommand{\Exc}{\operatorname{Exc}}
\newcommand{\codim}{\operatorname{codim}}
\newcommand{\Det}{\operatorname{Det}}
\newcommand{\lct}{\operatorname{lct}}
\newcommand{\DF}{\operatorname{DF}}
\newcommand{\sgn}{\operatorname{sgn}}
\newcommand{\sI}{\mathcal{I}}
\newcommand{\sO}{\mathcal{O}}
\newcommand{\sL}{\mathcal{L}}
\newcommand{\sF}{\mathcal{F}}
\newcommand{\sB}{\mathcal{B}}
\newcommand{\sA}{\mathcal{A}}
\newcommand{\yI}{\mathscr{I}}
\newcommand{\da}{\mathfrak{a}}
\newcommand{\db}{\mathfrak{b}}
\newcommand{\dS}{\mathfrak{S}}
\newtheorem{thm}{Theorem}[section]
\newtheorem{lemma}[thm]{Lemma}
\newtheorem{proposition}[thm]{Proposition}
\newtheorem{claim}[thm]{Claim}
\theoremstyle{definition}
\newtheorem{definition}[thm]{Definition}
\newtheorem{remark}[thm]{Remark}
\newtheorem*{ack}{Acknowledgments}
\begin{document}

\maketitle 

\begin{abstract}
The notion of Berman-Gibbs stability was originally introduced by 
Robert Berman for $\mathbb{Q}$-Fano varieties $X$. 
We show that the pair $(X, -K_X)$ 
is K-stable (resp.\ K-semistable) provided that $X$ is Berman-Gibbs stable 
(resp.\ semistable). 
\end{abstract}

\tableofcontents

\section{Introduction}\label{intro_section}

One of the most important problem for the study of $\Q$-Fano varieties $X$ 
(i.e., projective log-terminal varieties with $-K_X$ ample $\Q$-Cartier)
is to determine whether the pairs $(X, -K_X)$ are K-stable or not (for the notion 
of K-stability, see Section \ref{K_section}). 
Recently, Robert Berman introduced 
a new stability of $X$, which he calls Gibbs stability, 
and its variants. The main purpose of this paper is to show that, 
slightly modifying the definition (we rename it as \emph{Berman-Gibbs stability}), 
it implies the K-stability in Donaldson's \cite{don} and Tian's \cite{tian1} sense. 
In particular, by \cite{CDS1, CDS2, CDS3, tian2}, 
it implies the existence of K\"ahler-Einstein metric 
if $X$ is smooth and the base field is the complex number field. 
We remark that Robert Berman showed in 
\cite[Theorem 7.3]{B} that strongly Gibbs stable Fano manifolds defined over 
the complex number field admit K\"ahler-Einstein metrics, where the notion of 
strong Gibbs stability is stronger than the notion of Berman-Gibbs stability. 
Now we define the notion of Berman-Gibbs stability. 
(We remark that the notion of Berman-Gibbs stability is slightly weaker than 
the notion of uniform Gibbs stability. For detail, see \cite[Section 7]{B}.)

\begin{definition}\label{det_dfn}
Let $X$ be a projective variety and $L$ be a globally generated Cartier divisor on $X$. 
Set $N:=h^0(X, \sO_X(L))$ and 
$\phi:=\phi_{|L|}\colon X\to \pr^{N-1}$, where $\phi_{|L|}$ is a morphism defined by 
the complete linear system $|L|$. 
Consider the morphism $\Phi\colon X^N\to (\pr^{N-1})^N$ defined by the copies 
of $\phi$, that is, $\Phi(x_1,\dots,x_N):=(\phi(x_1),\dots,\phi(x_N))$ 
for $x_1,\dots,x_N\in X$. 
Let $\Det_N\subset(\pr^{N-1})^N$ be the divisor defined by the 
equation $\det(x_{ij})_{1\leq i, j\leq N}=0$, where 
\[
(x_{11}:\dots:x_{1N};\cdots\cdots;x_{N1}:\dots:x_{NN})
\]
are the multi-homogeneous 
coordinates of $(\pr^{N-1})^N$. 
We set the divisor $D_{X, L}\subset X^N$ defined by $D_{X, L}:=\Phi^*\Det_N$. 
\end{definition}

\begin{remark}\label{det_rmk}
The divisor $D_{X, L}\subset X^N$ is defined uniquely by $X$ and the linear equivalence 
class of $L$. In particular, the definition is independent of the choice of the basis 
of $H^0(X, \sO_X(L))$. 
\end{remark}

\begin{definition}[{\cite[(7.2)]{B}}]\label{B_dfn}
Let $X$ be a $\Q$-Fano variety. 
For $k\in\Z_{>0}$ with $-kK_X$ Cartier and globally generated, we set 
$N:=N_k:=h^0(X, \sO_X(-kK_X))$ and $D_k:=D_{X, -kK_X}\subset X^N$. 
Set 
\[
\gamma(X):=\liminf_{
\substack{k\to\infty\\
-kK_X:\text{{ Cartier}}
}}
\left(\lct_{\Delta_X}\left(X^N, \frac{1}{k}D_k\right)\right),
\]
where $\Delta_X(\simeq X)$ is the diagonal, that is,  
\[
\Delta_X:=\{(x,\dots,x)\in X^N\,\,|\,\,x\in X\}\subset X^N,
\]
and $\lct_{\Delta_X}(X^N, (1/k)D_k)$ is the log-canonical threshold 
(see \cite[\S 9]{L}) of the pair 
$(X^N, (1/k)D_k)$ around $\Delta_X$, that is, 
\[
\lct_{\Delta_X}\left(X^N, \frac{1}{k}D_k\right):=\sup\left\{c\in\Q_{>0}\,\,\Big|
\left(X^N, \frac{c}{k}D_k\right): 
\begin{matrix}
\text{ log-canonical}\\
\text{around }\Delta_X
\end{matrix}
\right\}.
\]
We say that $X$ is \emph{Berman-Gibbs stable} 
(resp.\ \emph{Berman-Gibbs semistable}) if $\gamma(X)>1$ 
(resp.\ $\gamma(X)\geq 1$).
\end{definition}

We show in this paper that Berman-Gibbs stability implies K-stability for any 
$\Q$-Fano variety. More precisely, we show the following: 

\begin{thm}[Main Theorem]\label{mainthm}
Let $X$ be a $\Q$-Fano variety. 
If $X$ is Berman-Gibbs stable $($resp.\ Berman-Gibbs semistable$)$, then 
the pair $(X, -K_X)$ is K-stable $($resp.\ K-semistable$)$. 
\end{thm}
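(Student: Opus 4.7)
The plan is to reduce K-(semi)stability to the valuative criterion: it suffices to show that for every divisorial valuation $v$ on $X$ with nontrivial center, the invariant $\beta_X(v) := A_X(v) - S_X(v)$ is nonnegative (respectively positive), where $S_X(v) := \lim_{k \to \infty} \tfrac{1}{k N_k} \sum_j v(s_j^{(k)})$ for an eigenbasis $\{s_j^{(k)}\}$ of $H^0(X, -kK_X)$ diagonalizing the filtration induced by $v$ (so $v(s_j^{(k)}) = \lambda_j^{(k)}$). The Berman--Gibbs hypothesis will be fed in by lifting $v$ to a carefully chosen quasi-monomial valuation on $X^N$.

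First, fix such a $v$, a log resolution $\pi \colon Y \to X$, and a prime divisor $E \subset Y$ with $v = \operatorname{ord}_E$. For each $k$, let $E_i \subset Y^N$ (with $N = N_k$) be the pullback of $E$ by the $i$-th projection; these are SNC on $Y^N$. For a weight vector $\alpha = (\alpha_1, \dots, \alpha_N) \in \R_{>0}^N$, let $w_\alpha$ denote the associated quasi-monomial valuation on $X^N$. Then $A_{X^N}(w_\alpha) = (\textstyle\sum_i \alpha_i)\, A_X(v)$ by the product formula for log discrepancies on the (klt) variety $X^N$, and the center of $w_\alpha$ on $X^N$ is $\pi(E)^N$, which meets $\Delta_X$ along the diagonally embedded $\pi(E)$; so $w_\alpha$ is admissible for testing the log-canonical threshold along $\Delta_X$.

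The central computation is $w_\alpha(D_k)$. After local trivialization of $\sO_X(-kK_X)$, $D_k$ is locally the zero locus of
$\det(\tilde s_j^{(k)}(x_i))_{i,j} = \sum_{\sigma \in \dS_N}\sgn(\sigma)\prod_i \tilde s_{\sigma(i)}^{(k)}(x_i),$
and $w_\alpha$ of the $\sigma$-monomial equals $\sum_i \alpha_i \lambda_{\sigma(i)}^{(k)}$ since $\operatorname{ord}_{E_{i'}}(\operatorname{pr}_i^*\tilde s_j^{(k)}) = \delta_{ii'}\lambda_j^{(k)}$. Choosing $\alpha$ so that the $N!$ values $\sum_i \alpha_i \lambda_{\sigma(i)}^{(k)}$ are pairwise distinct yields $w_\alpha(D_k) = \min_\sigma \sum_i \alpha_i \lambda_{\sigma(i)}^{(k)}$. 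Berman--Gibbs (semi)stability then gives
$(\textstyle\sum_i \alpha_i)\, A_X(v) \,\geq\, \tfrac{1-\delta}{k}\,\min_\sigma \sum_i \alpha_i \lambda_{\sigma(i)}^{(k)}$
for every $\delta > 0$ and all sufficiently large $k$. Specializing $\alpha_i = 1 + i\epsilon$ and letting $\epsilon \to 0^+$ collapses the minimum to $\sum_j \lambda_j^{(k)}$ (by the rearrangement inequality); dividing by $N_k$ and letting $k \to \infty$ produces $A_X(v) \geq (1 - \delta) S_X(v)$, whence $\beta_X(v) \geq 0$. For K-stability, $\gamma(X) > 1$ replaces the factor $1-\delta$ by a fixed $1 + \epsilon_0$, giving $\beta_X(v) \geq \epsilon_0 S_X(v) > 0$ for every nontrivial $v$ (since then $S_X(v) > 0$).

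I expect the main obstacle to be making the computation of $w_\alpha(D_k)$ fully rigorous: guaranteeing that a generic $\alpha$ renders the $N!$ values $\sum_i \alpha_i \lambda_{\sigma(i)}^{(k)}$ pairwise distinct (nonobvious when many $\lambda_j^{(k)}$ coincide, which is the generic situation), and that the iterated passage $\epsilon \to 0^+$ followed by $k \to \infty$ preserves the Berman--Gibbs inequality in the limit. A secondary technical point is the reduction from divisorial valuations (as used in the definition of lct in Definition \ref{B_dfn}) to quasi-monomial ones, together with the compatibility of log discrepancies under the product structure $Y^N \to X^N$ when $X$ has only klt (not necessarily smooth) singularities.
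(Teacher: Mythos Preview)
Your approach is correct, and the obstacles you flag dissolve on inspection. You never need the $N!$ values $\sum_i \alpha_i \lambda_{\sigma(i)}^{(k)}$ to be pairwise distinct: the inequality $w_\alpha(D_k) \geq \min_\sigma \sum_i \alpha_i \lambda_{\sigma(i)}^{(k)}$ holds for \emph{every} $\alpha$ (the valuation of a sum is at least the minimum of the valuations of its terms), and that lower bound is all the argument requires. Hence you may simply take $\alpha_i = 1$ from the outset; then $w_\alpha$ is the honest divisorial valuation given by the blow-up of $E^N \subset Y^N$, no quasi-monomial or limiting issues arise, and the $\epsilon \to 0^+$ step disappears.

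Your route is genuinely different from the paper's. The paper works directly with test configurations: for a flag ideal $\yI$ it uses Takagi's summation formula for multiplier ideals (Theorem~\ref{T_thm}) together with Lemma~\ref{W_lem} to transform the Berman--Gibbs hypothesis into a ``sub-log-canonical'' bound on the pair $(X\times\A^1, (t)^{1+\gamma w/(kN)}\cdot\yI^{\gamma s})$ (Proposition~\ref{IJ_prop}), and then plugs this into the intersection-theoretic formula for the Donaldson--Futaki invariant from \cite{OS,odk}. You instead invoke the valuative criterion of Fujita and Li (established after this paper was written) to reduce to $A_X(v) \geq \gamma(X)\, S_X(v)$, and verify this by testing the log-canonical threshold of $(X^N, \tfrac{1}{k}D_k)$ with a single valuation on $X^N$. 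Your argument is shorter and in hindsight is exactly the inequality $\gamma(X)\leq\delta(X)$ later isolated in the Fujita--Odaka theory of the $\delta$-invariant; the paper's argument, by contrast, is self-contained relative to the tools available at the time, and its filtration $\{\sF_j\}$ with the identity $m=\sum_j\dim\sF_j=NMks+w(k)$ already encodes the same weight-sum $\sum_j\lambda_j^{(k)}$ that drives your computation.
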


Now we explain how this article is organized. In Section \ref{K_section}, 
we recall the notion and basic properties of K-stability. 
In Section \ref{multi_section}, we recall the notion and basic properties of 
multiplier ideal sheaves, which is a powerful tool to determine how much 
the singularities of given divisors or given ideal sheaves are mild. 
In Section \ref{P1_section}, we determine whether the projective line $\pr^1$ is 
Berman-Gibbs stable or not. We will see that $\pr^1$ is Berman-Gibbs semistable 
but is not Berman-Gibbs stable. 
In Section \ref{key_section}, we prove the key propositions in order to prove 
Theorem \ref{mainthm}. We will prove in Proposition \ref{IJ_prop} that 
Berman-Gibbs stability of $X$ implies that the singularity of a given certain ideal sheaf 
on $X\times\A^1$ is somewhat mild. 
The strategy of the proof of Proposition \ref{IJ_prop} is 
to see their multiplier ideal sheaves in detail. 
In Section \ref{proof_section}, we prove Theorem \ref{mainthm}. 
By combining the results in \cite{OS}, 
Proposition \ref{IJ_prop}, and by some numerical arguments, 
we can prove Theorem \ref{mainthm}.

\begin{ack}
The author would like to thank Professor Robert Berman and 
Doctor Yuji Odaka for his helpful comments. 
Especially, Doctor Yuji Odaka informed him the interesting paper \cite{B} and 
Professor Robert Berman pointed out Remark \ref{Berman_rmk}. 
The author is partially supported by a JSPS Fellowship for Young Scientists. 
\end{ack}

Throughout this paper, we work in the category of algebraic (separated and of 
finite type) scheme over a fixed algebraically closed field $\Bbbk$ of characteristic 
zero. A \emph{variety} means a reduced and irreducible algebraic scheme. 
For the theory of minimal model program, we refer the readers to \cite{KoMo}; 
for the theory of multiplier ideal sheaves, we refer the readers to \cite{L}. 
For varieties $X_1,\dots,X_N$, let $p_j\colon \prod_{1\leq i\leq N}X_i\to X_j$ be 
the $j$-th projection morphism for any $1\leq j\leq N$.

\section{Preliminaries}\label{prelim_section}

In this section, we correct some definitions. 

\subsection{K-stability}\label{K_section}

We quickly recall the definition and basic properties of K-stability. 
For detail, for example, see \cite{odk} and references therein. 

\begin{definition}[{see \cite{tian1, don, RT, odk, LX}}]\label{K_dfn}
Let $X$ be a $\Q$-Fano variety of dimension $n$. 
\begin{enumerate}
\renewcommand{\theenumi}{\arabic{enumi}}
\renewcommand{\labelenumi}{(\theenumi)}
\item\label{K_dfn1}
A \emph{flag ideal} $\yI$ is an ideal sheaf $\yI\subset\sO_{X\times\A^1_t}$ 
of the form 
\[
\yI=I_M+I_{M-1}t+\cdots+I_1t^{M-1}+(t^M)\subset\sO_{X\times\A^1_t}, 
\]
where $\sO_X\supset I_1\supset\cdots\supset I_M$ is a sequence of coherent 
ideal sheaves. 
\item\label{K_dfn2}
Let $\yI$ be a flag ideal and let $s\in\Q_{>0}$. 
A \emph{normal $\Q$-semi test configuration} $(\sB, \sL)/\A^1$ \emph{of} 
$(X, -K_X)$ \emph{obtained by} $\yI$ \emph{and} $s$ is defined by the following datum: 
\begin{itemize}
\item
$\Pi\colon\sB\to X\times\A^1$ is the blowing up along $\yI$ and 
let $E$ be the exceptional divisor, that is, $\sO_{\sB}(-E):=\yI\sO_{\sB}$,
\item
$\sL:=\Pi^*p_1^*(-K_X)-sE$,
\end{itemize}
and we require the following conditions: 
\begin{itemize}
\item
$\sB$ is normal and the morphism $\Pi$ is not an isomorphism, 
\item
$\sL$ is semiample over $\A^1$.
\end{itemize}
\item\label{K_dfn3}
Let $\pi\colon(\sB, \sL)\to\A^1$ be a normal $\Q$-semi test configuration of 
$(X, -K_X)$ obtained by $\yI$ and $s$. For a sufficiently divisible positive integer $k$, 
the multiplicative group $\G_m$ naturally acts on $(\sB, \sO_{\sB}(k\sL))$ and the 
morphism $\pi$ is $\G_m$-equivariant, where the action $\G_m\times\A^1\to\A^1$ 
is in a standard way $(a, t)\mapsto at$. Let
$w(k)$ be the total weight of the induced action on $(\pi_*\sO_{\sB}(k\sL))|_{\{0\}}$ 
and set $N_k:=h^0(X, \sO_X(-kK_X))$. Then $w(k)k'N_{k'}-w(k')kN_k$ is a polynomial 
in variables $k$ and $k'$ for $k$, $k'$ sufficiently divisible positive integers. 
Let $\DF(\sB, \sL)$ be its coefficient in $k^{n+1}{k'}^n$, and is called the 
\emph{Donaldson-Futaki invariant} of $(\sB, \sL)/\A^1$. 
We set $\DF_0:=2((n+1)!)^2\DF(\sB, \sL)/((-K_X)^{\cdot n})$ for simplicity.
\item\label{K_dfn4}
The pair $(X, -K_X)$ is said to be \emph{K-stable} (resp.\ \emph{K-semistable}) 
if $\DF(\sB, \sL)>0$ (resp.\ $\DF(\sB, \sL)\geq 0$) holds 
for any normal $\Q$-semi test configuration $(\sB, \sL)/\A^1$ of $(X, -K_X)$ 
obtained by $\yI$ and $s$. 
\end{enumerate}
\end{definition}

The following is a fundamental result. 

\begin{thm}[{\cite{OS, odk}}]\label{odk_thm}
Let $X$ be a $\Q$-Fano variety of dimension $n$, 
$(\sB, \sL)/\A^1$ be a normal $\Q$-semi test configuration of $(X, -K_X)$ obtained 
by $\yI$ and $s$, and 
$(\bar{\sB}, \bar{\sL})/\pr^1$ be its natural compactification to $\pr^1$, that is, 
$\Pi\colon\bar{\sB}\to X\times\pr^1$ be the blowing up along $\yI$ and 
$\bar{\sL}:=\Pi^*p_1^*(-K_X)-sE$ on $\bar{\sB}$. Then the following holds: 
\begin{enumerate}
\renewcommand{\theenumi}{\arabic{enumi}}
\renewcommand{\labelenumi}{(\theenumi)}
\item\label{odk_thm1}
For a sufficiently divisible positive integer $k$, we have 
\[
w(k)=\chi(\bar{\sB}, \sO_{\bar{\sB}}(k\bar{\sL}))
-\chi(\bar{\sB}, \Pi^*p_1^*\sO_X(-kK_X))+O(k^{n-1}).
\]
In particular, we have 
\[
\lim_{k\to\infty}\frac{w(k)}{kN_k}=\frac{(\bar{\sL}^{\cdot n+1})}{(n+1)((-K_X)^{\cdot n})}.
\]
\item\label{odk_thm2}
We have 
\begin{eqnarray*}
\DF_0 & = & \frac{n}{n+1}(\bar{\sL}^{\cdot n+1})+
(\bar{\sL}^{\cdot n}\cdot K_{\bar{\sB}/\pr^1})\\
 & = & -\frac{1}{n+1}(\bar{\sL}^{\cdot n+1})+
(\bar{\sL}^{\cdot n}\cdot K_{\bar{\sB}/X\times\pr^1}-sE).
\end{eqnarray*}
\item\label{odk_thm3}
We have $(\bar{\sL}^{\cdot n}\cdot E)>0$. 
\item\label{odk_thm4}
If $K_{\bar{\sB}/X\times\pr^1}-sE\geq 0$, then $\DF_0>0$.
\end{enumerate}
\end{thm}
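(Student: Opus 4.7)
The plan is to prove the four parts in order, using equivariant Riemann--Roch on the compactified $\pr^1$-family $\bar{\pi}\colon\bar{\sB}\to\pr^1$ together with standard Hirzebruch--Riemann--Roch and intersection-theoretic manipulations.

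For (1), consider the sheaf $\bar{\pi}_*\sO_{\bar{\sB}}(k\bar{\sL})$ on $\pr^1$, which is $\G_m$-equivariant and locally free for $k$ sufficiently divisible (by semi-ampleness of $\bar{\sL}$ and cohomology and base change together with $R^i\bar{\pi}_*\sO_{\bar{\sB}}(k\bar{\sL})=0$ for $i>0$, $k\gg 0$). Any $\G_m$-equivariant vector bundle on $\pr^1$ splits as $\bigoplus_i\sO_{\pr^1}(d_i)\otimes\Bbbk(w_i)$. Since the fiber over $\infty\in\pr^1$ is the trivial $(X,-K_X)$ with trivial $\G_m$-action, the total weight there vanishes, forcing $\sum_iw_i=0$; hence the total weight at $0$ is $w(k)=\sum_id_i=\chi(\pr^1,\bar{\pi}_*\sO_{\bar{\sB}}(k\bar{\sL}))-N_k$. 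Rewriting $\chi(\pr^1,\bar{\pi}_*\sO_{\bar{\sB}}(k\bar{\sL}))=\chi(\bar{\sB},k\bar{\sL})$ by Leray, and $N_k=\chi(\bar{\sB},\Pi^*p_1^*\sO_X(-kK_X))+O(k^{n-1})$ via projection formula and the vanishing of $R^i\Pi_*\sO_{\bar{\sB}}$ up to lower-order corrections, gives the stated formula. The limit follows from asymptotic Riemann--Roch on the $(n+1)$-dimensional $\bar{\sB}$.

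For (2), one expands $w(k)=b_0k^{n+1}+b_1k^n+O(k^{n-1})$ and $N_k=a_0k^n+a_1k^{n-1}+O(k^{n-2})$, so that the coefficient of $k^{n+1}{k'}^n$ in $w(k)k'N_{k'}-w(k')kN_k$ equals $b_0a_1-b_1a_0$. Inserting the Riemann--Roch values ($a_0=(-K_X)^{\cdot n}/n!$, $a_1$ involving $c_1(X)=-K_X$; $b_0=\bar{\sL}^{\cdot n+1}/(n+1)!$, $b_1$ involving $\bar{\sL}^{\cdot n}\cdot K_{\bar{\sB}}$ modified by $-a_0$ from the $-N_k$ correction in (1)), and applying the normalization $\DF_0=2((n+1)!)^2\DF(\sB,\sL)/((-K_X)^{\cdot n})$, one collects terms to produce the first expression; here the relation $\bar{\sL}^{\cdot n}\cdot\bar{\pi}^*K_{\pr^1}=-2(-K_X)^{\cdot n}$ is what converts $K_{\bar{\sB}}$ into $K_{\bar{\sB}/\pr^1}$ in the final answer. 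The second equality then follows from $K_{\bar{\sB}/\pr^1}=K_{\bar{\sB}/X\times\pr^1}+\Pi^*p_1^*K_X$ combined with $\bar{\sL}=\Pi^*p_1^*(-K_X)-sE$ and $(\Pi^*p_1^*(-K_X))^{\cdot n+1}=0$ (the class factors through the $n$-dimensional $X$), which gives $\bar{\sL}^{\cdot n}\cdot\Pi^*p_1^*K_X=-\bar{\sL}^{\cdot n+1}-s\,\bar{\sL}^{\cdot n}\cdot E$.

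For (3), nefness of $\bar{\sL}$ (from semi-ampleness over $\A^1$ and ampleness of $-K_X$ on the $\infty$-fiber) and effectivity of $E$ give $(\bar{\sL}^{\cdot n}\cdot E)\geq 0$. Strict positivity follows because $\bar{\sL}$ restricts to the ample class $-K_X$ on every general $\bar{\pi}$-fiber, so a sufficiently divisible multiple of $\bar{\sL}$ defines a morphism birational onto its image; since $E$ dominates the central fiber and is not contracted to dimension less than $n$, one deduces $(\bar{\sL}|_E)^{\cdot n}>0$. Finally, for (4), apply the second expression in (2): the term $(\bar{\sL}^{\cdot n}\cdot(K_{\bar{\sB}/X\times\pr^1}-sE))\geq 0$ because $\bar{\sL}$ is nef and the divisor is effective by hypothesis, while the identity $\bar{\sL}^{\cdot n+1}=-sE\cdot\sum_{i=0}^n\bar{\sL}^{\cdot i}\cdot(\Pi^*p_1^*(-K_X))^{\cdot n-i}$ (from $a^{n+1}-b^{n+1}=(a-b)\sum_ia^ib^{n-i}$ with $a=\bar{\sL}$, $b=\Pi^*p_1^*(-K_X)$, and $b^{n+1}=0$) expresses $\bar{\sL}^{\cdot n+1}$ as $-s$ times a sum of non-negative intersections with $E$, strictly positive for $i=n$ by (3); thus $-(\bar{\sL}^{\cdot n+1})/(n+1)>0$ and $\DF_0>0$. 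The main obstacle will be the careful equivariant Riemann--Roch bookkeeping in (1) — splitting $\bar{\pi}_*\sO_{\bar{\sB}}(k\bar{\sL})$ equivariantly and pinning down the trivial action at $\infty$ — and, in (4), verifying non-negativity of every mixed intersection $E\cdot\bar{\sL}^{\cdot i}\cdot(\Pi^*p_1^*(-K_X))^{\cdot n-i}$ using nefness of both factors and effectivity of $E$.
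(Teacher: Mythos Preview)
The paper does not prove this theorem at all: its entire proof is a list of citations, with \eqref{odk_thm1}--\eqref{odk_thm2} attributed to \cite[Proof of Theorem 3.2]{odk}, \eqref{odk_thm3} to \cite[Lemma 4.5]{OS}, and \eqref{odk_thm4} to \cite[Proposition 4.4]{OS}. Your sketch is therefore not a different route but an attempt to reconstruct the cited arguments, and for parts \eqref{odk_thm1}, \eqref{odk_thm2} and \eqref{odk_thm4} it matches those sources closely (equivariant splitting of $\bar{\pi}_*\sO_{\bar{\sB}}(k\bar{\sL})$ on $\pr^1$ with trivial weight at $\infty$; asymptotic Riemann--Roch; and for \eqref{odk_thm4} the telescoping identity $\bar{\sL}^{\cdot n+1}=-sE\cdot\sum_i\bar{\sL}^{\cdot i}(\Pi^*p_1^*(-K_X))^{\cdot n-i}$ together with nefness).

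There is, however, a genuine gap in your argument for \eqref{odk_thm3}. After establishing $(\bar{\sL}^{\cdot n}\cdot E)\geq 0$ from nefness, you assert strict positivity on the grounds that ``$E$ dominates the central fiber and is not contracted to dimension less than $n$'' by the semiample morphism of $\bar{\sL}$. But this is precisely the point at issue: nothing you have said rules out that every component $E_\lambda$ of $E$ is contracted to something of dimension $<n$ by $|\ell\bar{\sL}|$. The actual argument (as in \cite[Lemma 4.5]{OS}) uses the hypothesis that $\Pi$ is \emph{not an isomorphism}. If $(\bar{\sL}^{\cdot n}\cdot E)=0$, then the relative ample model $\psi\colon\bar{\sB}\to Z$ of $\bar{\sL}$ over $\pr^1$ contracts every $E_\lambda$; since the $E_\lambda$ are exactly the $\Pi$-exceptional prime divisors, the induced birational morphism $Z\to X\times\pr^1$ is small, hence an isomorphism between normal varieties. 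Then $sE=\Pi^*p_1^*(-K_X)-\bar{\sL}$ is the pullback of a divisor on $X\times\pr^1$, but pushing forward gives $s\,\Pi_*E=0$ as $E$ is $\Pi$-exceptional, forcing $E=0$ and contradicting non-triviality of $\Pi$. You should replace the unjustified clause with this contraction argument; without it, \eqref{odk_thm3} (and hence the strict inequality in \eqref{odk_thm4}) is not established.
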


\begin{proof}
\eqref{odk_thm1} and \eqref{odk_thm2} follow from \cite[Proof of Theorem 3.2]{odk}, 
\eqref{odk_thm3} follows from \cite[Lemma 4.5]{OS}, 
and \eqref{odk_thm4} follows from \cite[Proposition 4.4]{OS}.
\end{proof}

\subsection{Multiplier ideal sheaves}\label{multi_section}

We recall the definition and basic properties of multiplier ideal sheaves. 

\begin{definition}\label{M_dfn}
Let $Y$ be a normal $\Q$-Gorenstein variety, 
$\da_1,\dots,\da_l\subset\sO_Y$ be coherent ideal sheaves and 
$c_1,\dots,c_l\in\Q_{\geq 0}$. 
The \emph{multiplier ideal sheaf} $\sI(Y, \da_1^{c_1}\cdots\da_l^{c_l})\subset\sO_Y$ 
of the pair $(Y, \da_1^{c_1}\cdots\da_l^{c_l})$ is defined by the following. 
Take a common log resolution $\mu\colon\hat{Y}\to Y$ of $\da_1,\dots\da_l$, i.e., 
$\hat{Y}$ is smooth, $\da_i\sO_{\hat{Y}}=\sO_{\hat{Y}}(-F_i)$ and 
$\Exc(\mu)$, $\Exc(\mu)+\sum_{1\leq i\leq l}F_i$ are divisors with 
simple normal crossing supports. Then we set 
\[
\sI(Y, \da_1^{c_1}\cdots\da_l^{c_l}):=
\mu_*\sO_{\hat{Y}}(\lceil K_{\hat{Y}/Y}-\sum_{1\leq i\leq l}c_iF_i\rceil), 
\]
where $\lceil K_{\hat{Y}/Y}-\sum_{1\leq i\leq l}c_iF_i\rceil$ is the smallest 
$\Z$-divisor which contains $K_{\hat{Y}/Y}-\sum_{1\leq i\leq l}c_iF_i$. 
\end{definition}

The following proposition can be proved essentially same as the proofs in 
\cite[\S 9]{L}. We omit the proof. 

\begin{proposition}[{see \cite[\S 9]{L}}]\label{M_prop}
We have the following: 
\begin{enumerate}
\renewcommand{\theenumi}{\arabic{enumi}}
\renewcommand{\labelenumi}{(\theenumi)}
\item\label{M_prop1}
$\sI(Y, \da_1^{c_1}\cdots\da_l^{c_l})$ does not depend 
on the choice of $\mu$. 
\item\label{M_prop2}
For an effective Cartier divisor $D$ on $Y$, we have 
\[
\sI(Y, \sO_Y(-D)^1\da_1^{c_1}\cdots\da_l^{c_l})=\sI(Y, \da_1^{c_1}\cdots\da_l^{c_l})\otimes\sO_Y(-D).
\] 
\item\label{M_prop4}
If coherent ideal sheaves $\db_1,\dots,\db_l\subset\sO_Y$ satisfy that 
$\da_i\subset\db_i$ for all $1\leq i\leq l$, then 
\[
\sI(Y, \da_1^{c_1}\cdots\da_l^{c_l})\subset
\sI(Y, \db_1^{c_1}\cdots\db_l^{c_l}).
\] 
\item\label{M_prop5}
Let $Y'$ be another normal $\Q$-Gorenstein variety, 
$\db_1,\dots,\db_{l'}\subset\sO_{Y'}$ be coherent ideal sheaves
and $c'_1,\dots,c'_{l'}\in\Q_{\geq 0}$. Then we have 
\begin{eqnarray*}
& & \sI(Y\times Y', p_1^{-1}\da_1^{c_1}\cdots p_1^{-1}\da_l^{c_l}\cdot
p_2^{-1}\db_1^{c'_1}\cdots p_2^{-1}\db_{l'}^{c'_{l'}})\\
&=& p_1^{-1}\sI(Y, \da_1^{c_1}\cdots\da_l^{c_l})\cdot
p_2^{-1}\sI(Y', \db_1^{c'_1}\cdots\db_{l'}^{c'_{l'}}).
\end{eqnarray*}
\end{enumerate}
\end{proposition}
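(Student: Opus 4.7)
The entire proposition is standard, and I would organize the argument around a single technical lemma: for a proper birational morphism $\nu\colon\tilde{Y}\to\hat{Y}$ between smooth varieties and a $\Q$-divisor $E$ on $\hat{Y}$ such that $\Supp(\nu^{*}E)\cup\Exc(\nu)$ has simple normal crossings on $\tilde{Y}$, one has
\[
\nu_{*}\sO_{\tilde{Y}}\bigl(\lceil K_{\tilde{Y}/\hat{Y}} - \nu^{*}E\rceil\bigr) = \sO_{\hat{Y}}\bigl(\lceil -E\rceil\bigr).
\]
The proof of this lemma proceeds by absorbing the integral pullback $\nu^{*}\lceil -E\rceil$ via the projection formula, thereby reducing to the assertion $\nu_{*}\sO_{\tilde{Y}}(F)=\sO_{\hat{Y}}$ for the residual $\nu$-exceptional effective snc divisor $F$; this last statement is a direct local coordinate computation using that each $\nu$-exceptional prime component appears in $K_{\tilde{Y}/\hat{Y}}$ with strictly positive coefficient.

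With this lemma in hand, part \eqref{M_prop1} is the standard dominating-resolution argument: given two log resolutions $\mu_{j}\colon\hat{Y}_{j}\to Y$ of the data, choose a common log resolution $\tilde{Y}$ with maps $\nu_{j}\colon\tilde{Y}\to\hat{Y}_{j}$ fine enough that the lemma applies with $E_{j}:=-K_{\hat{Y}_{j}/Y}+\sum c_{i}F_{i,j}$ (which is supported on the snc divisor $\Exc(\mu_{j})\cup\bigcup_{i}\Supp(F_{i,j})$); both multiplier ideals then agree with the one computed on $\tilde{Y}$. Parts \eqref{M_prop2} and \eqref{M_prop4} follow from the definition once a common log resolution is chosen. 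For \eqref{M_prop2}, resolve the $\da_{i}$ and $D$ simultaneously so that $\mu^{*}D$ forms part of the snc data; since $\mu^{*}D$ is integral, the round-up commutes with its subtraction, and the projection formula extracts $\sO_{Y}(-D)$ from the pushforward. For \eqref{M_prop4}, on a common log resolution the inclusion $\da_{i}\subset\db_{i}$ forces $F_{i}^{\da}\geq F_{i}^{\db}$ as effective Cartier divisors, which yields the desired inclusion of twisted structure sheaves and hence of their pushforwards.

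For part \eqref{M_prop5} I would pick separate log resolutions $\mu\colon\hat{Y}\to Y$ of $\da_{1},\dots,\da_{l}$ and $\mu'\colon\hat{Y}'\to Y'$ of $\db_{1},\dots,\db_{l'}$. The product $\mu\times\mu'$ is then a log resolution of all the pulled-back ideals on $Y\times Y'$: products of smooth varieties are smooth, and the two exceptional-plus-support divisor systems meet transversally because they are pulled back along distinct factor projections. Using the identity $K_{\hat{Y}\times\hat{Y}'/Y\times Y'}=p_{1}^{*}K_{\hat{Y}/Y}+p_{2}^{*}K_{\hat{Y}'/Y'}$ together with the fact that round-up distributes across a sum whose two pieces are supported in disjoint factor directions, K\"unneth identifies the pushforward on the product as the external tensor product of the two individual multiplier ideals, yielding the claimed product formula. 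The main obstacle is the technical lemma underlying \eqref{M_prop1}; the bookkeeping of round-up versus integral pullback and ensuring the log resolutions are compatibly chosen require some care, but once it is isolated the remaining parts are routine.
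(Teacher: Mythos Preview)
Your proposal is correct and follows the standard treatment in Lazarsfeld's book, which is precisely what the paper cites; the paper itself omits the proof entirely, stating only that it ``can be proved essentially same as the proofs in \cite[\S 9]{L}.'' So your write-up is more detailed than the paper's, but aligned with the intended reference.
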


The following theorem is a singular version of Musta\c{t}\u{a}'s summation 
formula \cite[Corollary 1.4]{M1} due to Shunsuke Takagi. 

\begin{thm}[{\cite[Theorem 3.2]{T}}]\label{T_thm}
Let $Y$ be a normal $\Q$-Gorenstein variety, let 
$\da_0,\da_1,\dots,\da_l\subset\sO_Y$ be coherent ideal sheaves 
and let $c_0$, $c\in\Q_{\geq 0}$. 
Then we have 
\[
\sI\left(Y, \da_0^{c_0}\cdot\Bigr(\sum_{i=1}^l\da_i\Bigl)^c\right)
=
\sum_{\substack{{\scriptstyle c_1+\cdots+c_l=c}\\
{\scriptstyle c_1,\dots,c_l\in\Q_{\geq 0}}}}
\sI\left(Y, \da_0^{c_0}\cdot\prod_{i=1}^l\da_i^{c_i}\right). 
\]
\end{thm}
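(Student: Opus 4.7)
My plan is to reduce the claimed identity to a divisor-level computation on a common log resolution and then to carry out a combinatorial decomposition of sections. First I would take a log resolution $\mu\colon\hat Y\to Y$ that resolves all of $\da_0,\da_1,\ldots,\da_l$ simultaneously with the sum $\da:=\sum_{i=1}^l\da_i$, so that $\da_i\cdot\sO_{\hat Y}=\sO_{\hat Y}(-F_i)$ for $0\leq i\leq l$ and $\da\cdot\sO_{\hat Y}=\sO_{\hat Y}(-F)$, with $F_0+F_1+\cdots+F_l+F+\Exc(\mu)$ of simple normal crossing support. Because $\da\supseteq\da_i$ one has $F\leq F_i$ componentwise, and because $\da$ is locally generated by sections of the $\da_i$'s, after sufficiently many further blow-ups one may arrange that $F$ is \emph{the} componentwise minimum of the $F_i$'s on the common SNC support.

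The inclusion $\supseteq$ is then straightforward: for any tuple $(c_1,\ldots,c_l)$ with $\sum_i c_i=c$ one has $\sum_i c_iF_i\geq cF$, hence
\[
\bigl\lceil K_{\hat Y/Y}-c_0F_0-\textstyle\sum_ic_iF_i\bigr\rceil\leq\bigl\lceil K_{\hat Y/Y}-c_0F_0-cF\bigr\rceil,
\]
and applying $\mu_*$ together with Proposition \ref{M_prop}\eqref{M_prop4} yields $\sI(Y,\da_0^{c_0}\prod_i\da_i^{c_i})\subseteq\sI(Y,\da_0^{c_0}\da^c)$. Summing over all admissible tuples finishes this direction.

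The reverse inclusion is the heart of the matter. Write $F_i=\sum_ja_{ij}E_j$ and $F=\sum_ja_jE_j$ with $a_j=\min_ia_{ij}$ over the prime components $E_1,\ldots,E_m$ of the common support, and for each $j$ pick $i^\star(j)$ realizing $a_j=a_{i^\star(j),j}$. The pointwise identity $cF=\min_{\sum c_i=c}\sum_ic_iF_i$ holds componentwise because concentrating the mass $c$ in slot $i^\star(j)$ matches the $E_j$-coefficient exactly. The task is then to decompose a local section of $\mu_*\sO_{\hat Y}(\lceil K_{\hat Y/Y}-c_0F_0-cF\rceil)$ into a sum of pieces, each lying in some $\mu_*\sO_{\hat Y}(\lceil K_{\hat Y/Y}-c_0F_0-\sum_ic_iF_i\rceil)$. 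Concretely I would set up a \v Cech / partition-of-unity decomposition on $\hat Y$ subordinate to the stratification by $i^\star$ and verify, using the SNC hypothesis, that each piece has the required vanishing on each $E_j$.

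The hardest step is precisely this last decomposition, which is not formal: because $i^\star(j)$ varies with $j$, no single tuple $(c_1,\ldots,c_l)$ works uniformly, and one needs a vanishing statement guaranteeing that the pointwise decomposition on $\hat Y$ actually descends through $\mu_*$. In the smooth case this is the content of Musta\c{t}\u{a}'s summation formula \cite[Corollary 1.4]{M1}; in the normal $\Q$-Gorenstein setting the rounding and the non-effectivity of $K_{\hat Y/Y}$ force one to invoke a local vanishing of Grauert-Riemenschneider / Kawamata-Viehweg type adapted to log-canonical pairs, which is exactly Takagi's contribution in \cite[Theorem 3.2]{T}.
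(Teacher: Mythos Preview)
The paper does not prove this theorem at all; it is quoted from Takagi \cite[Theorem~3.2]{T} with no argument given, so there is nothing in the paper to compare your outline against.

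On the outline itself: the inclusion $\supseteq$ is correct and easy, and your claim that on a common log resolution one has $F=\min_iF_i$ componentwise is also correct (once $\sO_{\hat Y}(-F)=\sum_i\sO_{\hat Y}(-F_i)$ is invertible, comparing orders along each $E_j$ forces the coefficient of $F$ to equal $\min_ia_{ij}$). The genuine gap is in the hard inclusion $\subseteq$. A \v{C}ech or partition-of-unity decomposition on $\hat Y$ does not produce the required splitting: we are in the algebraic category, the vanishing conditions along the $E_j$ are integral (through the round-up), and multiplying a section by a cutoff destroys regularity rather than refining divisorial vanishing. More seriously, your last sentence hands the crux back to \cite[Theorem~3.2]{T}, which \emph{is} the statement under consideration, so the argument is circular as written. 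For orientation, neither Musta\c{t}\u{a}'s original proof \cite{M1} nor Takagi's extension \cite{T} proceeds by section-decomposition on a fixed log resolution, and no Grauert--Riemenschneider or Kawamata--Viehweg local vanishing is the engine; Musta\c{t}\u{a} argues via jet schemes, and Takagi via reduction to positive characteristic and the corresponding identity for generalized test ideals, where the Frobenius makes the summation formula directly accessible.
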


\section{The projective line case}\label{P1_section}

In this section, we see whether the projective line $\pr^1$ is Berman-Gibbs stable 
or not. 
For any $k\in\Z_{>0}$, we have $N_k=2k+1$ and the morphism associated to 
the complete linear system $|-kK_{\pr^1}|$ is the $(2k)$-th Veronese embedding
$\pr^1\to\pr^{2k}$. 
If the multi-homogeneous coordinates of $(\pr^1)^{2k+1}$ are denoted by 
\[
(t_{1,0}:t_{1,1};\cdots;t_{2k+1,0}:t_{2k+1,1}), 
\]
then the divisor $D_k\subset(\pr^1)^{2k+1}$ corresponds to the following section:  
\[
\det
\begin{pmatrix}
t_{1,0}^{2k} & t_{1,0}^{2k-1}t_{1,1}^1 & \cdots & t_{1,0}^1t_{1,1}^{2k-1} & t_{1,1}^{2k}\\
\vdots     & \vdots                  & \cdots & \vdots                  & \vdots\\
\vdots     & \vdots                  & \cdots & \vdots                  & \vdots\\
t_{2k+1,0}^{2k} & t_{2k+1,0}^{2k-1}t_{2k+1,1}^1 & \cdots & t_{2k+1, 0}^1t_{2k+1,1}^{2k-1} & t_{2k+1,1}^{2k}\\
\end{pmatrix}.
\]
The above matrix is so-called the Vandermonde matrix. Thus, around 
$0\in\A^{2k+1}_{u_1,\dots,u_{2k+1}}\subset(\pr^1)^{2k+1}$, the divisor 
$D_k\subset\A^{2k+1}_{u_1,\dots,u_{2k+1}}$ is defined by the polynomial 
$f_k\in\Bbbk[u_1,\dots,u_{2k+1}]$, where 
\[
f_k:=\prod_{1\leq i<j\leq 2k+1}(u_i-u_j).
\]
By Lemma \ref{mustata_lem}, $\lct_0(\A^{2k+1}, (f_k=0))=2/(2k+1)$. 
Thus 
\[
\lct_{\Delta_{\pr^1}}((\pr^1)^N, (1/k)D_k)=2k/(2k+1).
\] 
Hence $\gamma(\pr^1)=1$. As a consequence, the projective line $\pr^1$ is 
Berman-Gibbs semistable but is not Berman-Gibbs stable.

\begin{lemma}[{\cite{M2}}]\label{mustata_lem}
For $g\geq 2$, we have 
\[
\lct_0\left(\A^g_{u_1,\dots,u_g}, \left(\prod_{1\leq i<j\leq g}\left(u_i-u_j\right)=0\right)\right)=2/g.
\] 
\end{lemma}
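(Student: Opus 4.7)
The polynomial $f_g := \prod_{1\leq i<j\leq g}(u_i - u_j)$ is the defining equation of the braid hyperplane arrangement $\mathcal{A}_g$ in $\A^g$, consisting of the $\binom{g}{2}$ hyperplanes $H_{ij}=\{u_i=u_j\}$. The plan is to apply Musta\c{t}\u{a}'s \cite{M2} formula expressing the log canonical threshold of a central hyperplane arrangement at its vertex as
\[
\lct_0\bigl(\A^g,f_g\bigr)=\min_{\pi}\frac{g-k(\pi)}{\sum_{i=1}^{k(\pi)}\binom{\lambda_i(\pi)}{2}},
\]
where $\pi$ ranges over set partitions of $\{1,\dots,g\}$ with at least one block of size $\geq 2$, $k(\pi)$ is the number of blocks, and $\lambda_1(\pi),\dots,\lambda_{k(\pi)}(\pi)$ are the block sizes. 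The formula comes from the bijection between flats of $\mathcal{A}_g$ containing the origin and set partitions: $\pi$ gives the flat $V_\pi=\{u_i=u_j : i,j\text{ lie in the same block of }\pi\}$, which has codimension $g-k(\pi)$ in $\A^g$ and is contained in exactly $\sum\binom{\lambda_i}{2}$ of the hyperplanes $H_{ij}$.

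The trivial partition $\pi_0$ (a single block of size $g$) corresponds to the small diagonal $\Delta=\{u_1=\cdots=u_g\}$ and already furnishes the upper bound by a single valuation: if $E$ is the exceptional divisor of the blow-up of $\Delta\subset\A^g$, then the log discrepancy of $E$ over $\A^g$ is $g-1$ while $\operatorname{ord}_E(f_g)=\binom{g}{2}$, yielding the ratio $(g-1)/\binom{g}{2}=2/g$.

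The matching lower bound reduces to the combinatorial inequality $(g-k)/\sum_{i}\binom{\lambda_i}{2}\geq 2/g$ for every partition of $g$ into $k$ parts, each $\geq 1$, with at least one part $\geq 2$. Clearing denominators turns it into $g(g-k+1)\geq \sum_i \lambda_i^2$, which follows at once from the elementary bound $\lambda_i\leq g-k+1$ (the remaining $k-1$ parts contribute at least $1$ each), since then $\sum_i \lambda_i^2\leq (g-k+1)\sum_i\lambda_i=g(g-k+1)$. Equality forces $\pi=\pi_0$, so the minimum is attained there and equals $2/g$. The main obstacle in carrying this out is the correct invocation of the hyperplane-arrangement machinery of \cite{M2}, and in particular checking that the minimum flat $\{0\}$ of $\mathcal{A}_g$ is among those edges over which Musta\c{t}\u{a}'s minimum is taken; once that is in place, the short combinatorics above closes the proof and gives $\lct_0(\A^g,f_g)=2/g$.
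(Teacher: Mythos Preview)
Your argument is correct and follows the same overall strategy as the paper: the upper bound $\lct_0\leq 2/g$ via the blow-up of the small diagonal, and the lower bound via Musta\c{t}\u{a}'s formula $\lct(\A^g,D)=\min_W r(W)/s(W)$ over flats of the arrangement. The difference lies in the combinatorial step. You invoke the standard identification of the intersection lattice of the braid arrangement with the partition lattice of $\{1,\dots,g\}$ and finish with the one-line bound $\sum_i\lambda_i^2\leq(g-k+1)\sum_i\lambda_i=g(g-k+1)$ using $\lambda_i\leq g-k+1$. The paper instead proves the slightly sharper inequality $s(W)\leq r(r+1)/2$ (with $r=r(W)=g-k$) by an inductive hands-on manipulation of the indices of defining hyperplanes $H_{i_1j_1},\dots,H_{i_rj_r}$, reducing to the case $i_1=\cdots=i_r$. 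Your route is shorter and more conceptual once the partition description is granted; the paper's is self-contained and does not rely on that bijection.

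One small slip worth fixing: the minimum flat of $\mathcal{A}_g$ is the diagonal line $\Delta=\{u_1=\cdots=u_g\}$, not $\{0\}$, since the intersection of all the hyperplanes $H_{ij}$ is one-dimensional. This is harmless---the arrangement is translation-invariant along $\Delta$, and your partition $\pi_0$ does correspond to $\Delta$---but you should say so rather than speak of $\{0\}$ as an edge.
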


\begin{proof}
Set $D:=(\prod_{1\leq i<j\leq g}(u_i-u_j)=0)\subset\A^g$. 
Let $\tau\colon V\to\A^g$ be the blowing up along the line 
$(u_1=\cdots=u_g)$ and let $F$ be its exceptional divisor. For $c\in\Q_{>0}$, 
the discrepancy 
$a(F, \A^g, cD)$ is equal to $g-2-cg(g-1)/2$. Thus $\lct_0(\A^g, D)\leq 2/g$. 
Hence it is enough to show that 
$\lct(\A^g, D)\geq 2/g$. 

Let $H_{ij}\subset\A^g$ be the hyperplane defined by $u_i-u_j=0$ and 
set $\sA:=\{H_{ij}\}_{1\leq i, j\leq g, i\neq j}$. 
We set 
\[
L(\sA):=\Bigl\{W\subset\A^g\,\,\Big|\,\, {}^\exists\sA'\subset\sA;
W=\bigcap_{H\in\sA'}H\Bigr\}.
\]
For $W\in L(\sA)$, set 
$s(W):=\#\{H\in\sA\,|\,W\subset H\}$ and $r(W):=\codim_{\A^g}W$. 
By \cite[Corollary 0.3]{M2}, 
\[
\lct(\A^g, D)=\min_{W\in L(\sA)\setminus\{\A^g\}}\left\{\frac{r(W)}{s(W)}\right\}. 
\]
Pick any $W\in L(\sA)\setminus\{\A^g\}$ and set $r:=r(W)$. 
It is enough to show that $s(W)\leq r(r+1)/2$. If $r=1$, then $s(W)=1$. Thus we can
assume that $r\geq 2$. 
There exist distinct $H_{i_1j_1},\dots,H_{i_rj_r}\in\sA$ such that 
$W=H_{i_1j_1}\cap\dots\cap H_{i_rj_r}$. 

Assume that $i_1,j_1\not\in\{i_2, j_2,\dots,i_r,j_r\}$. 
For any $H_{ij}\in L(\sA)$, if $W\subset H_{ij}$ then $H_{i_1j_1}=H_{ij}$ or 
$H_{i_2j_2}\cap\dots\cap H_{i_rj_r}\subset H_{ij}$. 
Thus $s(W)=1+s(H_{i_2j_2}\cap\dots\cap H_{i_rj_r})\leq 1+r(r-1)/2<r(r+1)/2$ 
by induction on $r$. Hence we can assume that $(i_0:=)i_1=i_2$. 

Assume that $i_0,j_1,j_2\not\in\{i_3, j_3,\dots,i_r,j_r\}$. 
For any $H_{ij}\in L(\sA)$, if $W\subset H_{ij}$ then 
$H_{i_0j_1}\cap H_{i_0j_2}\subset H_{ij}$ or 
$H_{i_3j_3}\cap\dots\cap H_{i_rj_r}\subset H_{ij}$. 
Thus $s(W)=s(H_{i_0j_1}\cap H_{i_0j_2})+s(H_{i_3j_3}\cap\dots\cap H_{i_rj_r})
\leq 2\cdot 3/2+(r-1)(r-2)/2<r(r+1)/2$ 
by induction on $r$. Hence we can assume that $i_3\in\{i_0, j_1, j_2\}$. 
If $i_3=j_1$, then $H_{i_0j_1}\cap H_{j_1j_3}=H_{i_0j_1}\cap H_{i_0j_3}$. 
By replacing $H_{j_1j_3}$ to $H_{i_0j_3}$, we can assume that $(i_0=)i_1=i_2=i_3$. 

We repeat this process. (We note that, for any $1\leq j\leq r-1$, 
$j(j+1)/2+(r-j)(r-j+1)/2<r(r+1)/2$.) We can assume that $(i_0=)i_1=\cdots=i_r$. 
For any $H_{ij}\in L(\sA)$, the condition $W\subset H_{ij}$ is equivalent to the condition 
$\{i,j\}\subset\{i_0,j_1,\dots,j_r\}$. Thus $s(W)=r(r+1)/2$. 
Therefore we have proved that $s(W)\leq r(r+1)/2$. 
\end{proof}

\section{Key propositions}\label{key_section}

In this section, we see the key propositions in order to prove Theorem \ref{mainthm}. 
Throughout the section, let $X$ be a $\Q$-Fano variety of dimension $n$ and let 
$(\sB, \sL)/\A^1$, $\yI$, $s$, and so on are as in Section \ref{K_section}. 

\begin{lemma}\label{W_lem}
Let $k$ be a sufficiently divisible positive integer. 
\begin{enumerate}
\renewcommand{\theenumi}{\arabic{enumi}}
\renewcommand{\labelenumi}{(\theenumi)}
\item\label{W_lem1}
$($cf. \cite[\S 3--4]{RT}$)$ 
Set $I_0:=\sO_X$. We also set 
\[
\tilde{I}_j:=\sum_{\substack{j_1+\cdots+j_{ks}=j\\ 0\leq j_1,\dots,j_{ks}\leq M}}
I_{j_1}\cdots I_{j_{ks}}
\]
for all $0\leq j\leq Mks$. 
Then $\yI^{ks}=\tilde{I}_{Mks}+\tilde{I}_{Mks-1}t+\cdots+\tilde{I}_1t^{Mks-1}+(t^{Mks})$. 
Consider the filtration 
\[
H^0(X, \sO_X(-kK_X))=\sF_0\supset\sF_1\supset\dots\supset\sF_{Mks}\supset 0
\]
defined by $\sF_j:=H^0(X, \sO_X(-kK_X)\cdot\tilde{I}_j)$. 
Set $m:=\sum_{j=1}^{Mks}\dim\sF_j$. Then $m=NMks+w$ holds, 
where $w=w(k)$ and $N=N_k$ are as in Definition \ref{K_dfn} \eqref{K_dfn3}. 
\item\label{W_lem2}
Let $\tilde{I}_{i, j}\subset\sO_{X_i}$ be the copies of $\tilde{I}_j\subset\sO_X$ 
$(X_i:=X)$ for all $1\leq i\leq N$ and set 
\[
J_j:=\sum_{\substack{j_1+\cdots+j_N=j\\ 0\leq j_1,\dots,j_N\leq Mks}}
p_1^{-1}\tilde{I}_{1, j_1}\cdots p_N^{-1}\tilde{I}_{N, j_N}\subset\sO_{X^N}
\]
for all $0\leq j\leq NMks$. 
Then $\sO_{X^N}(-D_k)\subset J_m$ holds. 
\end{enumerate}
\end{lemma}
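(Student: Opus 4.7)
The plan is to treat \eqref{W_lem1} and \eqref{W_lem2} separately, both via the filtration $\sF_\bullet$. For the first identity in \eqref{W_lem1} I would expand the $ks$-th power of $\yI$ directly: writing $\yI = \sum_{j=0}^{M} I_j\,t^{M-j}$ with the convention $I_0 = \sO_X$, each generator of $\yI^{ks}$ has the form $f_{j_1}\cdots f_{j_{ks}}\,t^{Mks-(j_1+\cdots+j_{ks})}$ with $f_{j_i}\in I_{j_i}$, and collecting by the exponent of $t$ yields the claimed $\yI^{ks} = \sum_{j=0}^{Mks}\tilde{I}_j\,t^{Mks-j}$.

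For $m = NMks + w(k)$, I would read off the $\G_m$-weights of the central fibre from the filtration. Since $\sB$ is normal and $\sL$ is semiample over $\A^1$, for sufficiently divisible $k$ the Ross--Thomas argument of \cite[\S3--4]{RT} gives
\[
H^0(\sB, k\sL) \;=\; H^0\!\bigl(X\times\A^1,\, p_1^*\sO_X(-kK_X)\otimes\yI^{ks}\bigr),
\]
and, using the first part of the lemma, this embeds into $H^0(X,-kK_X)\otimes_{\Bbbk}\Bbbk[t]$ as $\bigoplus_{j\geq 0}\sF_{Mks-j}\cdot t^j$ (setting $\sF_l := H^0(X,-kK_X)$ for $l\leq 0$). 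Quotienting by multiplication by $t$ produces
\[
(\pi_*\sO_\sB(k\sL))|_{\{0\}} \;=\; \bigoplus_{j=0}^{Mks}\bigl(\sF_{Mks-j}/\sF_{Mks-j+1}\bigr)\cdot t^j,
\]
whose summands are $\G_m$-eigenspaces. With the natural lift of the action $(a,t)\mapsto at$ the summand at $t^j$ carries weight $-j$, so
\[
w(k) \;=\; -\sum_{j=0}^{Mks} j\,\dim\!\bigl(\sF_{Mks-j}/\sF_{Mks-j+1}\bigr);
\]
substituting $l = Mks - j$, applying $\sum_l\dim(\sF_l/\sF_{l+1}) = N$ and the Abel summation $\sum_l l\,\dim(\sF_l/\sF_{l+1}) = \sum_{l\geq 1}\dim\sF_l = m$ gives $w(k) = m - NMks$. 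The main technical point is exactly this bookkeeping: matching the sign of $w(k)$ to the convention of Definition \ref{K_dfn} and justifying the pushforward identification is standard after \cite{RT, odk} but requires care.

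For \eqref{W_lem2} I would pick a basis $b_1,\ldots,b_N$ of $H^0(X,-kK_X)$ adapted to $\sF_\bullet$: integers $0\leq d_1,\ldots,d_N\leq Mks$ and sections $b_i\in\sF_{d_i}$ such that $\{b_i : d_i\geq l\}$ is a basis of $\sF_l$ for every $l$. Then $\sum_i d_i = \sum_{l\geq 1}\dim\sF_l = m$, and locally each $b_i$ lies in $\tilde{I}_{d_i}\cdot\sO_X(-kK_X)$. By Remark \ref{det_rmk} the ideal $\sO_{X^N}(-D_k)$ is locally generated by $\det(b_i(x_j))_{i,j}$, whose Leibniz expansion
\[
\det(b_i(x_j)) \;=\; \sum_{\sigma\in S_N}\sgn(\sigma)\prod_{i=1}^{N} b_i(x_{\sigma(i)})
\]
writes it as a sum over permutations. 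Each factor $b_i(x_{\sigma(i)}) = p_{\sigma(i)}^*b_i$ sits in $p_{\sigma(i)}^{-1}\tilde{I}_{\sigma(i),d_i}\cdot p_{\sigma(i)}^*\sO_X(-kK_X)$, so the $\sigma$-summand lies in $\prod_{r=1}^{N} p_r^{-1}\tilde{I}_{r,\,d_{\sigma^{-1}(r)}}$, which is one of the defining summands of $J_m$ since $(d_{\sigma^{-1}(1)},\ldots,d_{\sigma^{-1}(N)})$ is a permutation of $(d_1,\ldots,d_N)$ summing to $m$. Therefore $\det(b_i(x_j))\in J_m$, giving $\sO_{X^N}(-D_k)\subset J_m$; this step is routine once the adapted basis is fixed.
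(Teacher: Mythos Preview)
Your proof is correct and follows essentially the same route as the paper: both expand $\yI^{ks}$ directly, identify the central fibre of $\pi_*\sO_\sB(k\sL)$ with $\bigoplus_j (\sF_{Mks-j}/\sF_{Mks-j+1})\cdot t^j$ via the Ross--Thomas description, and compute $w(k)=m-NMks$ by the same Abel summation; for part \eqref{W_lem2} both choose a basis adapted to the filtration $\sF_\bullet$ (your $d_i$ is the paper's $f(i)$), expand the determinant defining $D_k$ over $\dS_N$, and observe that each permutation term lands in a summand of $J_m$ because the exponents sum to $m$.
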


\begin{proof}
\eqref{W_lem1} By \cite[\S 3--4]{RT}, $(\pi_*\sO_{\sB}(k\sL))|_{\{0\}}$ is equal to
\[
H^0(X\times\A^1_t, \sO(-kK_{X\times\A^1})\cdot\yI^{ks})
/t\cdot H^0(X\times\A^1_t, \sO(-kK_{X\times\A^1})\cdot\yI^{ks})
\]
and is also equal to
\[
\sF_{Mks}\oplus\bigoplus_{j=1}^{Mks}t^j\cdot\Bigl(\sF_{Mks-j}/\sF_{Mks-j+1}\Bigr). 
\]
Thus $w=\sum_{j=1}^{Mks}(-j)(\dim\sF_{Mks-j}-\dim\sF_{Mks-j+1})
=-Mks\dim\sF_0+\sum_{j=1}^{Mks}\dim\sF_j$. This implies that $m=NMks+w$. 

\eqref{W_lem2}
Choose a basis $s_1,\dots,s_N\in H^0(X, \sO_X(-kK_X))$ along the filtration 
$\{\sF_j\}_{0\leq j\leq Mks}$. 
For $1\leq j\leq N$, set 
\[
f(j):=\max\{0\leq i\leq Mks\,|\,s_j\in\sF_i\}.
\] 
Let $s_{i1},\dots,s_{iN}\in H^0(X_i, \sO_{X_i}(-kK_{X_i}))$ be the $i$-th copies of 
$s_1,\dots,s_N$ for all $1\leq i\leq N$. 
Then the divisor $D_k\subset X^N$ corresponds to the section 
\[
\sum_{\sigma\in\dS_N}\sgn\sigma\cdot s_{1\sigma(1)}\cdots s_{N\sigma(N)}
\in H^0(X^N, \sO_{X^N}(-kK_{X^N})), 
\]
where $\dS_N$ is the $N$-th symmetric group. 
Take any $\sigma\in\dS_N$. Since $s_{i, j}\in p_i^{-1}\tilde{I}_{i, f(j)}$, we have 
\[
s_{1\sigma(1)}\cdots s_{N\sigma(N)}\in p_1^{-1}\tilde{I}_{1, f(\sigma(1))}\cdots
p_N^{-1}\tilde{I}_{N, f(\sigma(N))}.
\]
Note that $\sum_{i=1}^Nf(\sigma(i))=\sum_{i=1}^Nf(i)=
\sum_{j=0}^{Mks}j(\dim\sF_j-\dim\sF_{j+1})=m$, where $\sF_{Mks+1}:=0$. 
Thus $\sO_{X^N}(-D_k)\subset J_m$. 
\end{proof}

\begin{proposition}\label{IJ_prop}
Assume that a positive rational number $\gamma\in\Q_{>0}$ satisfies that, 
for a sufficiently divisible positive integer $k$, the pair $(X^N, (\gamma/k)D_k)$ 
is log-canonical around $\Delta_X$. 
Then for any $\varepsilon\in(0, 1)\cap\Q$ and any sufficiently big positive 
integer $P$, the structure sheaf $\sO_{X\times\A^1}$ is contained in the sheaf
\[
\sI\left(X\times\A^1, (t)^{(1-\varepsilon)(1+\gamma w/(kN))+P}\cdot
\yI^{(1-\varepsilon)\gamma s}\right)\otimes\sO_{X\times\A^1}(P\cdot(t=0))
\]
$($that is, the pair $(X\times\A^1, (t)^{(1+\gamma w/(kN))}\cdot\yI^{\gamma s})$ 
is ``sub-log-canonical"$)$, 
where $w=w(k)$ and $N=N_k$ are as in Definition \ref{K_dfn} \eqref{K_dfn3}. 
\end{proposition}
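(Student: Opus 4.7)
The plan is to reduce the desired statement on $X\times\A^1$ to the log-canonicity hypothesis on $X^N$ via the inclusion $\sO_{X^N}(-D_k)\subset J_m$ from Lemma~\ref{W_lem}\eqref{W_lem2}, expand both sides using Takagi's summation formula (Theorem~\ref{T_thm}) together with the product formula (Proposition~\ref{M_prop}\eqref{M_prop5}), and then bridge the two expansions by a pigeonhole argument over the index $i\in\{1,\ldots,N\}$.

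First I translate the hypothesis: by $\sO_{X^N}(-D_k)\subset J_m$ and Proposition~\ref{M_prop}\eqref{M_prop4}, the assumption yields $\sO_{X^N}\subset\sI(X^N,J_m^{\gamma/k})$ around $\Delta_X$. Applying Theorem~\ref{T_thm} to $J_m=\sum_{\vec j}\prod_i p_i^{-1}\tilde{I}_{i,j_i}$, where $\vec j=(j_1,\ldots,j_N)$ ranges over tuples with $\sum_i j_i=m$, and then Proposition~\ref{M_prop}\eqref{M_prop5} iteratively, one obtains
\[
\sI(X^N,J_m^{\gamma/k})=\sum_{\substack{(c_{\vec j})\\ \sum c_{\vec j}=\gamma/k}}\prod_{i=1}^N p_i^{-1}\sI\Bigl(X,\prod_a\tilde{I}_a^{e_{i,a}}\Bigr),\qquad e_{i,a}:=\sum_{\vec j:\,j_i=a}c_{\vec j}.
\]
Evaluating at a point $(x,\ldots,x)\in\Delta_X$ and using that a product of pulled-back ideals in the local ring $\sO_{X^N,(x,\ldots,x)}$ equals the whole ring if and only if each factor does, I obtain for every $x\in X$ a distribution $(c_{\vec j})$ (depending on $x$) such that the pair $(X,\prod_a\tilde{I}_a^{e_{i,a}})$ is log-canonical at $x$ for all $i$.

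Since $\sum_i j_i=m=NMks+w$ on the support, $\sum_i\sum_a a\cdot e_{i,a}=m\gamma/k$, so pigeonhole produces an index $i^*$ with $\sum_a a\cdot e_{i^*,a}\geq m\gamma/(kN)=Ms\gamma+w\gamma/(kN)$. On the other side, I expand the target multiplier ideal on $X\times\A^1$ using $\yI^{ks}=\sum_j\tilde{I}_j(t)^{Mks-j}$ together with Theorem~\ref{T_thm} and Proposition~\ref{M_prop}\eqref{M_prop5}:
\[
\sI\bigl(X\times\A^1,(t)^{(1-\varepsilon)(1+\gamma w/(kN))+P}\cdot\yI^{(1-\varepsilon)\gamma s}\bigr)=\sum_{\substack{(c_j)\\ \sum c_j=(1-\varepsilon)\gamma/k}}p_1^{-1}\sI\Bigl(X,\prod_j\tilde{I}_j^{c_j}\Bigr)\cdot p_2^{-1}\sI\bigl(\A^1,(t)^{\alpha(c)}\bigr),
\]
with $\alpha(c):=(1-\varepsilon)(1+\gamma w/(kN))+P+\sum_j(Mks-j)c_j$. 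Setting $c_j:=(1-\varepsilon)e_{i^*,j}$, the pigeonhole bound forces $\alpha(c)\leq P+1-\varepsilon$, so $\sI(\A^1,(t)^{\alpha(c)})\supseteq(t)^P$, while the $X$-factor equals $\sO_X$ at $x$ since its coefficients are just scaled down from log-canonical ones. Thus the chosen summand already contains $(t)^P$ at every $(x,t_0)\in X\times\A^1$, giving exactly the stated inclusion (after using Proposition~\ref{M_prop}\eqref{M_prop2} to reinterpret ``$\sO\subset\sI\otimes\sO(P(t=0))$'' as ``$(t)^P\subset\sI$'').

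The main obstacle will be the combinatorial bookkeeping: the collapse $\vec j\mapsto j_{i^*}$ must convert the high-dimensional weight distribution $(c_{\vec j})$ on $X^N$ into a one-dimensional distribution $(c_j)$ on $X\times\A^1$ of the same total mass, and the identity $m=NMks+w$ must line up so that the pigeonhole inequality precisely cancels the $w\gamma/(kN)$ contribution to the $(t)$-exponent in $\alpha(c)$. The $\varepsilon$-slack is essential here: it drops $\alpha(c)$ strictly below $P+1$, so that $\lfloor\alpha(c)\rfloor\leq P$, converting the boundary log-canonicity inherited from $X^N$ into the strict containment $(t)^P\subset\sI$ on $X\times\A^1$ required by the proposition.
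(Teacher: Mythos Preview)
Your argument is essentially the paper's proof: the same chain (Lemma~\ref{W_lem}\eqref{W_lem2} $\Rightarrow$ Takagi's summation formula $\Rightarrow$ product formula $\Rightarrow$ restrict to the diagonal $\Rightarrow$ pigeonhole on the factor index $\Rightarrow$ collapse $(c_{\vec j})\mapsto(\xi_i)$ $\Rightarrow$ re-expand the target on $X\times\A^1$ via $\yI^{ks}=\sum_i\tilde{I}_i(t)^{Mks-i}$) appears in the paper, where your pointwise ``some summand equals the local ring'' is packaged as the global equality of Claim~\ref{IJ_claim}; these are equivalent since in a local ring a sum of ideals equals the whole ring iff one summand does.

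One genuine slip to fix: log-canonicity of $(X^N,(\gamma/k)D_k)$ does \emph{not} give $\sO_{X^N}\subset\sI(X^N,J_m^{\gamma/k})$, only $\sO_{X^N}=\sI(X^N,J_m^{(1-\varepsilon)\gamma/k})$ around $\Delta_X$. The paper therefore carries the factor $(1-\varepsilon)$ from the very first line (so the set $A$ has total mass $(1-\varepsilon)\gamma/k$), and then takes $c_j=e_{i^*,j}$ rather than $c_j=(1-\varepsilon)e_{i^*,j}$ at the end. With this relocation of $\varepsilon$ your computation of $\alpha(c)\le P+1-\varepsilon$ goes through verbatim and the proof is complete.
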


\begin{proof}
We set 
\begin{eqnarray*}
\Theta & := &  \left\{\vec{j}=(j_1,\dots,j_N)\,\,\Big|\,\,\substack{
j_1+\cdots+j_N=m,\\ 0\leq j_1,\dots,j_N\leq Mks}\right\}, \\
A & := & \left\{\vec{\alpha}=(\alpha_{\vec{j}})_{\vec{j}\in\Theta}\,\,\Big|\,\,
\substack{
\sum_{\vec{j}\in\Theta}\alpha_{\vec{j}}=(1-\varepsilon)\gamma/k, \\
{}^\forall\alpha_{\vec{j}}\in\Q_{\geq 0}
}\right\}, \\
B & := & \left\{\vec{\beta}=(\beta_0,\dots,\beta_{Mks})\,\,\Big|\,\,
\substack{
\beta_0,\dots,\beta_{Mks}\in\Q_{\geq 0}, \\
\sum_{j=0}^{Mks}\beta_j=(1-\varepsilon)\gamma/k
}\right\},\\
\Xi & := & \left\{\vec{\xi}=(\xi_0,\dots,\xi_{Mks})\,\,\bigg|\,\, \substack{
\xi_0,\dots,\xi_{Mks}\in\Q_{\geq 0}, \\
\sum_{j=0}^{Mks}\xi_j=(1-\varepsilon)\gamma/k, \\
\sum_{j=0}^{Mks}j\xi_j\geq(1-\varepsilon)\gamma m/(kN)
}\right\}
\end{eqnarray*}
for simplicity. 

\begin{claim}\label{IJ_claim}
We have the equality
\[
\sO_X=\sum_{\vec{\xi}\in\Xi}\sI\left(X, \prod_{i=0}^{Mks}\tilde{I}_i^{\xi_i}\right).
\]
\end{claim}

\begin{proof}[Proof of Claim \ref{IJ_claim}]
By Proposition \ref{M_prop}, Theorem \ref{T_thm} and Lemma \ref{W_lem}, 
around $\Delta_X$, we have 
\begin{eqnarray*}
\sO_{X^N} & = & \sI(X^N, \sO_{X^N}(-D_k)^{(1-\varepsilon)\gamma/k})\\
& \subset & \sI(X^N, J_m^{(1-\varepsilon)\gamma/k})\\
& = & \sI\biggl(X^N, \Bigl(\sum_{\vec{j}\in\Theta}
p_1^{-1}\tilde{I}_{1, j_1}\cdots 
p_N^{-1}\tilde{I}_{N, j_N}\Bigr)^{(1-\varepsilon)\gamma/k}\biggr)\\
& = & \sum_{\vec{\alpha}\in A}
\sI\Bigl(X^N, \prod_{\vec{j}\in\Theta}(p_1^{-1}\tilde{I}_{1, j_1}\cdots 
p_N^{-1}\tilde{I}_{N, j_N})^{\alpha_{\vec{j}}}\Bigr)\\
& = & \sum_{\vec{\alpha}\in A}
p_1^{-1}\sI\bigl(X_1, \prod_{\vec{j}\in\Theta}\tilde{I}_{1, j_1}^{\alpha_{\vec{j}}}\bigr)
\cdots 
p_N^{-1}\sI\bigl(X_N, \prod_{\vec{j}\in\Theta}\tilde{I}_{N, j_N}^{\alpha_{\vec{j}}}\bigr).
\end{eqnarray*}
Restricts to $\Delta_X$, we have 
\[
\sO_X=\sum_{\vec{\alpha}\in A}
\sI\bigl(X, \prod_{\vec{j}\in\Theta}\tilde{I}_{j_1}^{\alpha_{\vec{j}}}\bigr)
\cdots 
\sI\bigl(X, \prod_{\vec{j}\in\Theta}\tilde{I}_{j_N}^{\alpha_{\vec{j}}}\bigr).
\]
Fix an arbitrary $\vec{\alpha}\in A$. 
Since 
\[
\sum_{\vec{j}\in\Theta}\alpha_{\vec{j}}j_1+\cdots+
\sum_{\vec{j}\in\Theta}\alpha_{\vec{j}}j_N=(1-\varepsilon)\gamma m/k, 
\]
we have $\sum_{\vec{j}\in\Theta}\alpha_{\vec{j}}j_q\geq (1-\varepsilon)\gamma m/(kN)$ 
for some $1\leq q\leq N$. We set 
\[
\xi_i:=\sum_{\vec{j}\in\Theta; \,\,j_q=i}\alpha_{\vec{j}}
\]
for $0\leq i\leq Mks$. Then $\vec{\xi}:=(\xi_0,\dots, \xi_{Mks})\in\Xi$ and 
\[
\sI\Bigl(X, \prod_{\vec{j}\in\Theta}\tilde{I}_{j_q}^{\alpha_{\vec{j}}}\Bigr)
=\sI\Bigl(X, \prod_{i=0}^{Mks}\tilde{I}_i^{\xi_i}\Bigr). 
\]
Therefore we have proved Claim \ref{IJ_claim}. 
\end{proof}

By Proposition \ref{M_prop} \eqref{M_prop5} and Claim \ref{IJ_claim}, we have 
\[
\sO_{X\times\A^1}(-P\cdot(t=0))=\sum_{\vec{\xi}\in\Xi}
\sI\Bigl(X\times\A^1, (t)^{1-\varepsilon+P}\cdot\prod_{i=0}^{Mks}\tilde{I}_i^{\xi_i}\Bigr).
\]
For any $\vec{\xi}\in\Xi$, since 
$(1-\varepsilon)(1+\gamma m/(kN))+P-\sum_{i=0}^{Mks}i\xi_i\leq 1-\varepsilon +P$, 
we have 
\begin{eqnarray*}
 &  & 
\sI\Bigl(X\times\A^1, (t)^{1-\varepsilon+P}\cdot\prod_{i=0}^{Mks}\tilde{I}_i^{\xi_i}\Bigr)\\
 & \subset & 
\sI\Bigl(X\times\A^1, (t)^{(1-\varepsilon)
(1+\gamma m/(kN))+P-\sum_{i=0}^{Mks}i\xi_i}\cdot\prod_{i=0}^{Mks}\tilde{I}_i^{\xi_i}\Bigr).
\end{eqnarray*}
On the other hand, by Lemma \ref{W_lem} \eqref{W_lem1} and Theorem \ref{T_thm}, 
we have 
\begin{eqnarray*}
& & \sI\left(X\times\A^1, (t)^{(1-\varepsilon)(1+\gamma w/(kN))+P}
\cdot\yI^{(1-\varepsilon)\gamma s}\right)\\
 & = & \sI\biggl(X\times\A^1, (t)^{(1-\varepsilon)(1-\gamma(Ms-m/(kN)))+P}\cdot
\Bigl(\sum_{i=0}^{Mks}(t)^{Mks-i}\tilde{I}_i\Bigr)^{(1-\varepsilon)\gamma/k}\biggr)\\
 & = & \sum_{\vec{\beta}\in B}
\sI\biggl(X\times\A^1, (t)^{(1-\varepsilon)(1-\gamma(Ms-m/(kN)))+P}\cdot
\prod_{i=0}^{Mks}\Bigl((t)^{Mks-i}\tilde{I}_i\Bigr)^{\beta_i}\biggr)\\
 & = & \sum_{\vec{\beta}\in B}
\sI\Bigl(X\times\A^1, (t)^{(1-\varepsilon)(1+\gamma m/(kN))+P-\sum_{i=0}^{Mks}
i\beta_i}\cdot
\prod_{i=0}^{Mks}\tilde{I}_i^{\beta_i}\Bigr).
\end{eqnarray*}
Since $\Xi\subset B$, we have proved Proposition \ref{IJ_prop}. 
\end{proof}

\section{Proof of Theorem \ref{mainthm}}\label{proof_section}

In this section, we prove Theorem \ref{mainthm}. 
Let $X$ be a $\Q$-Fano variety of dimension $n$ and set $\gamma:=\gamma(X)$. 
We assume that $\gamma\geq 1$. 
Let $(\sB, \sL)/\A^1$ be a normal $\Q$-semi test configuration of $(X, -K_X)$ 
obtained by $\yI$ and $s$ and let $E$, $\bar{\sB}$, $\bar{\sL}$ and so on are 
as in Section \ref{K_section}. 
Let $\{E_\lambda\}_{\lambda\in\Lambda}$ be the set of $\Pi$-exceptional 
prime divisors. 
We note that $\Lambda\neq\emptyset$ since the morphism $\Pi$ is 
not an isomorphism. We set 
\begin{eqnarray*}
\sum_{\lambda\in\Lambda}a_\lambda E_\lambda & := & K_{\bar{\sB}/X\times\pr^1},\\
\sum_{\lambda\in\Lambda}b_\lambda E_\lambda & := & \Pi^*X_0-\hat{X}_0,\\
\sum_{\lambda\in\Lambda}c_\lambda E_\lambda & := & E
\end{eqnarray*}
as in \cite{OS}, where $X_0$ is the fiber of $p_2\colon X\times\pr^1\to\pr^1$ 
at $0\in\pr^1$ and $\hat{X}_0$ is the strict transform of $X_0$ in $\bar{\sB}$. 
We note that $b_\lambda$, $c_\lambda\in\Z_{>0}$ and $a_\lambda-b_\lambda+1>0$ 
for any $\lambda\in\Lambda$ since the pair $(X\times\pr^1, X_0)$ is 
purely-log-terminal. We set 
\[
d:=\max_{\lambda\in\Lambda}\left\{\frac{\gamma sc_\lambda-
(a_\lambda-b_\lambda+1)}{\gamma b_\lambda}\right\}.
\]
By Theorem \ref{odk_thm} \eqref{odk_thm4}, we can assume that $d>0$.

\begin{claim}\label{BGK_claim}
We have the inequality: 
\[
\frac{-(\bar{\sL}^{\cdot n+1})}{(n+1)((-K_X)^{\cdot n})}\geq d.
\]
\end{claim}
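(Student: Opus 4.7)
The plan is to extract from Proposition \ref{IJ_prop} a discrepancy inequality at each $\Pi$-exceptional divisor $E_\lambda$, and then pass to the limit as $k\to\infty$ via Theorem \ref{odk_thm}\eqref{odk_thm1}.

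First, I would fix any rational $\gamma'$ with $0 < \gamma' < \gamma$. By the definition of $\gamma = \gamma(X)$ as a liminf, the log-canonical hypothesis of Proposition \ref{IJ_prop} (with $\gamma'$ in place of $\gamma$) holds for a cofinal family of sufficiently divisible positive integers $k$. For each such $k$, Proposition \ref{IJ_prop} provides, for every $\varepsilon \in (0,1)\cap\Q$ and every sufficiently big positive integer $P$,
\[
\sO_{X\times\A^1} \subset \sI\bigl(X\times\A^1, (t)^{(1-\varepsilon)(1+\gamma'w/(kN))+P} \cdot \yI^{(1-\varepsilon)\gamma's}\bigr) \otimes \sO_{X\times\A^1}(P\cdot(t=0)).
\]
Identifying $(t)^P$ with $\sO_{X\times\A^1}(-P\cdot(t=0))$ and applying Proposition \ref{M_prop}\eqref{M_prop2}, the twist by $\sO_{X\times\A^1}(P\cdot(t=0))$ cancels a factor of $(t)^P$ inside the multiplier ideal, so the containment is equivalent to
\[
\sO_{X\times\A^1} \subset \sI\bigl(X\times\A^1, (t)^{(1-\varepsilon)(1+\gamma'w/(kN))} \cdot \yI^{(1-\varepsilon)\gamma's}\bigr).
\]
Since this holds for every $\varepsilon\in(0,1)\cap\Q$, the pair $(X\times\A^1, (t)^{1+\gamma'w(k)/(kN_k)} \cdot \yI^{\gamma's})$ is sub-log-canonical.

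Next, I would translate sub-log-canonicity into a discrepancy inequality at each $\Pi$-exceptional $E_\lambda$. Since $(t^M) \subset \yI$, every $E_\lambda$ lies over $X\times\{0\}$, so $b_\lambda, c_\lambda > 0$. By the very definitions of $a_\lambda$, $b_\lambda$, $c_\lambda$, the coefficient of $E_\lambda$ in $\Pi^*(t=0)$ is $b_\lambda$, the vanishing order of $\yI\sO_\sB$ along $E_\lambda$ is $c_\lambda$, and the coefficient of $E_\lambda$ in $K_{\sB/X\times\A^1}$ is $a_\lambda$. Hence sub-log-canonicity at $E_\lambda$ (computed on any common log resolution factoring through $\sB$) reads
\[
a_\lambda - \bigl(1+\gamma'w(k)/(kN_k)\bigr)b_\lambda - \gamma's\,c_\lambda \geq -1,
\]
which, using $\gamma'b_\lambda > 0$, rearranges to
\[
\frac{\gamma's c_\lambda - (a_\lambda - b_\lambda + 1)}{\gamma' b_\lambda} \leq -\frac{w(k)}{kN_k}.
\]

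Finally, I would take the maximum over the finite set $\Lambda$, take the liminf over the valid $k$'s, and apply Theorem \ref{odk_thm}\eqref{odk_thm1}:
\[
\max_{\lambda\in\Lambda}\frac{\gamma's c_\lambda - (a_\lambda - b_\lambda + 1)}{\gamma' b_\lambda} \leq -\lim_{k\to\infty}\frac{w(k)}{kN_k} = -\frac{(\bar{\sL}^{\cdot n+1})}{(n+1)((-K_X)^{\cdot n})}.
\]
Letting $\gamma' \nearrow \gamma$, the left-hand side tends to $d$ by continuity of each entry in $\gamma'$ and the maximum over the finite set $\Lambda$, yielding the claim. The main obstacle will be the careful bookkeeping needed to identify the conclusion of Proposition \ref{IJ_prop} (with its $P$-twist and $(1-\varepsilon)$-factor) with plain sub-log-canonicity, and then to match the resulting discrepancy inequality with the definition of $d$; once this identification is clean, the remaining steps are essentially a rearrangement, a finite maximum, and an application of Theorem \ref{odk_thm}\eqref{odk_thm1}.
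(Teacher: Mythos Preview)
Your proposal is correct and follows essentially the same route as the paper: apply Proposition \ref{IJ_prop} with a rational $\gamma'<\gamma$, read off the discrepancy inequality $a_\lambda-(1+\gamma'w/(kN))b_\lambda-\gamma' s c_\lambda\geq -1$ at each $E_\lambda$, pass to the limit in $k$ via Theorem \ref{odk_thm}\eqref{odk_thm1}, and then let $\gamma'\nearrow\gamma$. The paper does exactly this (writing $\gamma-\varepsilon'$ for your $\gamma'$), only more tersely; the one cosmetic difference is that the paper reads the discrepancy inequality directly from the $P$-twisted multiplier ideal rather than first ``cancelling'' the $(t)^P$ factor, which avoids ever writing a multiplier ideal with a possibly negative exponent on $(t)$---your intermediate display $\sI(X\times\A^1,(t)^{(1-\varepsilon)(1+\gamma'w/(kN))}\cdot\yI^{(1-\varepsilon)\gamma's})$ is not literally covered by Definition \ref{M_dfn} when $1+\gamma'w/(kN)<0$, though your subsequent discrepancy computation on a resolution factoring through $\sB$ is unaffected by this and is correct.
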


\begin{proof}[Proof of Claim \ref{BGK_claim}]
For any sufficiently small positive rational numbers $\varepsilon$ and $\varepsilon'$, 
by Proposition \ref{IJ_prop}, the coefficient of 
\[
K_{\bar{\sB}/X\times\pr^1}-(1-\varepsilon)(1+(\gamma-\varepsilon')w/(kN))\Pi^*X_0
-(1-\varepsilon)(\gamma-\varepsilon')sE
\]
at $E_\lambda$ is strictly bigger than $-1$ for any $\lambda\in\Lambda$ and 
for any sufficiently divisible positive integer $k$. 
Thus, by Theorem \ref{odk_thm} \eqref{odk_thm1}, 
we have 
\[
-1\leq a_\lambda-\left(1-\gamma\frac{-(\bar{\sL}^{\cdot n+1})}{(n+1)
((-K_X)^{\cdot n})}\right)b_\lambda-\gamma sc_\lambda
\]
for any $\lambda\in\Lambda$. Hence we have proved Claim \ref{BGK_claim}. 
\end{proof}

By Claim \ref{BGK_claim}, we have the inequalities: 
\begin{eqnarray*}
\DF_0 & = & \frac{-(\bar{\sL}^{\cdot n+1})}{n+1}+\Bigl(\bar{\sL}^{\cdot n}\cdot
\sum_{\lambda\in\Lambda}(a_\lambda-sc_\lambda)E_\lambda\Bigr)\\
& \geq & \Bigl(\bar{\sL}^{\cdot n}\cdot d\Pi^*X_0
+\sum_{\lambda\in\Lambda}(a_\lambda-sc_\lambda)E_\lambda\Bigr)\\
& = & d(\bar{\sL}^{\cdot n}\cdot\hat{X}_0)+\Bigl(\bar{\sL}^{\cdot n}\cdot 
\sum_{\lambda\in\Lambda}(db_\lambda+a_\lambda-sc_\lambda)E_\lambda\Bigr)\\
& \geq & \Bigl(\bar{\sL}^{\cdot n}\cdot \sum_{\lambda\in\Lambda}(db_\lambda
+a_\lambda-sc_\lambda)E_\lambda\Bigr).
\end{eqnarray*}
For any $\lambda\in\Lambda$, 
\begin{eqnarray*}
&&db_\lambda+a_\lambda-sc_\lambda\geq\frac{1}{\gamma}
\left(\gamma sc_\lambda-(a_\lambda-b_\lambda+1)\right)+a_\lambda-sc_\lambda\\
 & = &\frac{\gamma-1}{\gamma}(a_\lambda-b_\lambda+1)+b_\lambda-1
\geq\frac{\gamma-1}{\gamma}(a_\lambda-b_\lambda+1)
\end{eqnarray*}
holds. Hence 
\[
\DF_0\geq\frac{\gamma-1}{\gamma}\Bigl(\bar{\sL}^{\cdot n}\cdot
\sum_{\lambda\in\Lambda}
(a_\lambda-b_\lambda+1)E_\lambda\Bigr). 
\]
By Theorem \ref{odk_thm} \eqref{odk_thm3}, 
$(\bar{\sL}^{\cdot n}\cdot\sum_{\lambda\in\Lambda}
(a_\lambda-b_\lambda+1)E_\lambda)>0$ holds. 
Therefore, $\DF_0\geq 0$ holds. Moreover, if $\gamma>1$, then $\DF_0>0$ holds. 

As a consequence, we have proved Theorem \ref{mainthm}.

\begin{remark}\label{Berman_rmk}
Robert Berman pointed out to the author that 
there is an analogy between the argument after Claim \ref{BGK_claim} 
and the argument in \cite[Lemma 3.4]{Berman}. 
In fact, the argument in \cite[Lemma 3.4]{Berman} gives the inequality 
\[
\frac{\DF_0}{((-K_X)^{\cdot n})}\geq\frac{-(\bar{\sL}^{\cdot n+1})}
{(n+1)((-K_X)^{\cdot n})}-d_0, 
\]
where 
\[
d_0:=\max\left\{0, \,\,\max_{\lambda\in\Lambda}\left\{\frac{sc_\lambda-a_\lambda}
{b_\lambda}\right\}\right\}.
\]
\end{remark}


\begin{thebibliography}{99}

\bibitem[Ber12]{Berman}
R.\ Berman, \emph{K-polystability of Q-Fano varieties admitting 
Kahler-Einstein metrics}, arXiv:1205.6214.

\bibitem[Ber13]{B}
R.\ Berman, \emph{Kahler-Einstein metrics, canonical random point processes 
and birational geometry}, 
arXiv:1307.3634.

\bibitem[CDS12a]{CDS1}
X.\ Chen, S.\ Donaldson and S.\ Sun, \emph{Kahler-Einstein metrics on Fano manifolds, 
I: approximation of metrics with cone singularities}, 
arXiv:1211.4566.

\bibitem[CDS12b]{CDS2}
X.\ Chen, S.\ Donaldson and S.\ Sun, \emph{Kahler-Einstein metrics on Fano manifolds, 
II: limits with cone angle less than $2\pi$}, 
arXiv:1212.4714.

\bibitem[CDS13]{CDS3}
X.\ Chen, S.\ Donaldson and S.\ Sun, \emph{Kahler-Einstein metrics on Fano manifolds, 
III: limits as cone angle approaches $2\pi$ and completion of the main proof}, 
arXiv:1302.0282.

\bibitem[Don02]{don}
S.\ Donaldson, \emph{Scalar curvature and stability of toric varieties}, 
J.\ Differential Geom.\ \textbf{62} (2002), no.\ 2, 289--349. 

\bibitem[KM98]{KoMo}
J.\ Koll{\'a}r and S.\ Mori, \emph{Birational geometry of algebraic varieties},
With the collaboration of C.\ H.\ Clemens and A.\ Corti. 
Cambridge Tracts in Math., \textbf{134},
Cambridge University Press, Cambridge, 1998.

\bibitem[Laz04]{L}
R.\ Lazarsfeld, \emph{Positivity in algebraic geometry, II: Positivity for Vector Bundles, 
and Multiplier Ideals}, Ergebnisse der Mathematik und ihrer Grenzgebiete.\ (3) 
\textbf{49}, Springer, Berlin, 2004.

\bibitem[LX14]{LX}
C.\ Li and C.\ Xu, \emph{Special test configuration and K-stability of Fano varieties}, 
Ann.\ of Math.\ \textbf{180} (2014), no.\ 1, 197--232. 

\bibitem[Mus02]{M1}
M.\ Musta\c{t}\u{a}, \emph{The multiplier ideals of a sum of ideals}, 
Trans.\ Amer.\ Math.\ Soc.\ \textbf{354} (2002), no.\ 1, 205--217. 

\bibitem[Mus06]{M2}
M.\ Musta\c{t}\u{a}, \emph{Multiplier ideals of hyperplane arrangements}, 
Trans.\ Amer.\ Math.\ Soc.\ \textbf{358} (2006), no.\ 11, 5015--5023. 

\bibitem[OS12]{OS}
Y.\ Odaka and Y.\ Sano, \emph{Alpha invariant and K-stability of $\mathbb{Q}$-Fano 
varieties}, 
Adv.\ Math.\ \textbf{229} (2012), no.\ 5, 2818--2834.

\bibitem[Odk13]{odk}
Y.\ Odaka, \emph{A generalization of the Ross-Thomas slope theory}, 
Osaka.\ J.\ Math.\ \textbf{50} (2013), no.\ 1, 171--185.

\bibitem[RT07]{RT}
J.\ Ross and R.\ Thomas, \emph{A study of the Hilbert-Mumford criterion for the 
stability of projective varieties}, J.\ Algebraic Geom.\ 
\textbf{16} (2007), no.\ 2, 201--255.

\bibitem[Tak06]{T}
S.\ Takagi, \emph{Formulas for multiplier ideals on singular varieties}, 
Amer.\ J.\ Math.\ \textbf{128} (2006), no.\ 6, 1345--1362.

\bibitem[Tia97]{tian1}
G.\ Tian, \emph{K\"ahler-Einstein metrics with positive scalar curvature}, 
Invent.\ Math.\ \textbf{130} (1997), no.\ 1, 1--37.

\bibitem[Tia12]{tian2}
G.\ Tian, \emph{K-stability and K\"ahler-Einstein metrics}, 
arXiv:1211.4669.

\end{thebibliography}
\end{document}